\theoremstyle{plain}
\newtheorem{thm}[subsubsection]{Theorem}
\newtheorem{prop}[subsubsection]{Proposition}
\theoremstyle{definition}
\newtheorem{remark}[subsubsection]{Remark}
\theoremstyle{definition}
\numberwithin{equation}{subsubsection}
\def\11{\mathbf{1}}
\def\AA{\mathbf{A}} 
\def\CC{\mathbf{C}}
\def\QQ{\mathbf{Q}}
\def\coker{\mathrm{coker}}
\def\DM{\mathcal{D}}
\def\Gr{\mathrm{Gr}}
\def\MHM{\mathcal{M}}
\def\pH{\vphantom{H}^p\!H}
\def\Spec{\mathrm{Spec}}
\def\pupsi{\vphantom{H}^p\Psi^{\mathrm{u}}}
\def\red{\mathrm{red}}
\newcommand{\mapright}[1]{\xrightarrow{#1}}
\newcommand{\holim@}[2]{%
      \vtop{\m@th\ialign{##\cr
                  \hfil$#1\operator@font holim\,$\hfil\cr
                      \noalign{\nointerlineskip\kern1.5\ex@}#2\cr
                          \noalign{\nointerlineskip\kern-\ex@}\cr}}%
                      }
                      \newcommand{\holim}{%
                            \mathop{\mathpalette\holim@{\leftarrowfill@\textstyle}}\nmlimits@
                        }
                        \title{Hodge-Grothendieck classes and monodromy invariants of nearby cycles sheaves}
\author{R. Virk}
\begin{document}
\maketitle
\renewcommand{\thesubsection}{\textbf{\arabic{subsection}}}
\renewcommand{\thesubsubsection}{\textbf{\arabic{subsection}.\arabic{subsubsection}}}

This note was prompted by a question of G. Williamson \cite{W} (paraphrasing): \emph{``How much information about nearby cycles can we deduce without knowing the defining function?''}. The idea being that the associated graded of the monodromy filtration\footnote{In the sense of \cite[1.6]{D}.} on (unipotent) nearby cycles is determined by its primitive part. The latter should be completely determined by the central fibre.

We pursue this in the Hodge theoretic context.
Theorem \ref{hdgclass} shows that the Hodge-Grothendieck class of (the unipotent part of) nearby cycles sheaves is independent of the defining equation. 
Theorem \ref{locinv} is a generalized local invariant cycles result (a variant of \cite[Th\'eor\`eme 3.6.1]{D}).
No claims to orginality are being made.

\subsection{Notation}
\subsubsection{}We write $\MHM(X)$ for the category of mixed Hodge modules on a variety\footnote{`Variety' = `separated scheme of finite type over $\mathrm{Spec}(\CC)$'.} $X$, and $\DM(X)$ for its bounded derived category. The cohomology functors associated to the evident t-structure (with heart $\MHM(X)$) are denoted $\pH^i\colon \DM(X) \to \MHM(X)$. The $d$-th Tate twist will be denoted by $(d)$.
Functors on derived categories will always be derived. I.e., we write $f_*$ instead of $Rf_*$, etc.

\subsubsection{}Let $f\colon X \to \AA^1$ be a morphism of varieties. Set $X_0 = f^{-1}(0)$ and $X^* = X - X_0$. We write $i\colon X_0 \to X$ and $j\colon X^*\to X$ for the inclusions.
\[ \xymatrix{X_0\ar[d] \ar[r]^i & X\ar[d]^f & X^*\ar[l]_j\ar[d]\\
\{0\}\ar[r] & \AA^1 & \AA^1 - \{0\}\ar[l]}\]
The unipotent part of the nearby cycles functor associated to $f$ is denoted:
\[ \psi_f^u\colon \DM(X)\to \DM(X_0) \]
Shift convention is that $\psi_f^u[-1]$ is t-exact.
We write:
\[ N \colon \psi_f^u \to \psi_f^u(-1) \]
for the log of the unipotent part of monodromy.

\subsection{Preliminaries}
\subsubsection{}For $M\in \DM(X^*)$ it is convenient to set:
\[\pupsi_f(M) = \psi_f^u(j_*M)[-1]. \]
In particular, $\pupsi_f$ restricts to a t-exact functor $\MHM(X^*) \to \MHM(X_0)$.\footnote{Our $\pupsi_f$ is denoted $\psi_{f,1}$ in \cite{S89}.}

\subsubsection{}
We have a canonical distinguished triangle (see \cite[Remark 5.2.2]{S88}):
\begin{equation}\label{dt} \pupsi_f(M) \mapright{N} \pupsi_f(M)(-1) \to i^*j_*M \mapright{[1]} \end{equation}
For $M\in \MHM(X^*)$ this yields an exact sequence:
\begin{equation}\label{exactseq} 0 \to \pH^{-1}(i^*j_*M) \to \pupsi_f(M) \mapright{N} \pupsi_f(M)(-1) \to \pH^0(i^*j_*M) \to 0 \end{equation}

\begin{prop}\label{kerprop}Let $M\in \MHM(X^*)$. Let $j_{!*}(M) \in \MHM(X)$ be the intermediate extension of $M$ to $X$. Then we have canonical isomorphisms:
\begin{enumerate}
\item $ \ker(N) \simeq i^*j_{!*}(M)[-1]$
\item $ \coker(N) \simeq i^!j_{!*}(M)[1]$
\end{enumerate}
\end{prop}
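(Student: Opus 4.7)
The plan is to deduce both isomorphisms from the single distinguished triangle
\[ j_{!*}M \to j_*M \to C \to [1] \]
in $\DM(X)$, where the morphism $j_{!*}M \to j_*M$ is obtained by composing the canonical monomorphism $j_{!*}M \hookrightarrow \pH^0(j_*M)$ with the truncation $\pH^0(j_*M) \to j_*M$ (which exists because $j_*M \in \pD^{\geq 0}(X)$). Applying $j^*$ recovers the identity on $M$, so $j^*C = 0$; hence $C$ is supported on $X_0$, giving $C \simeq i_*i^*C \simeq i_*i^!C$, and in particular $i^*C \simeq i^!C$. Since $j_{!*}M$ is perverse and injects into $\pH^0(j_*M)$, the long exact sequence of perverse cohomology for this triangle shows $C \in \pD^{\geq 0}(X)$, and t-exactness of $i_*$ transports this to $i^*C \in \pD^{\geq 0}(X_0)$. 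Triangle (\ref{dt}) simultaneously exhibits $i^*j_*M$ as the cone of a morphism between perverse sheaves, placing it in $\pD^{[-1,0]}(X_0)$.

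For (i), apply $i^*$ to the triangle above and extract the long exact sequence of perverse cohomology. The defining property $i^*j_{!*}M \in \pD^{\leq -1}(X_0)$ gives $\pH^k(i^*j_{!*}M) = 0$ for $k \geq 0$; combined with $\pH^k(i^*C) = 0$ for $k \leq -1$ and $\pH^k(i^*j_*M) = 0$ for $k \notin \{-1,0\}$, the sequence collapses to an isomorphism $\pH^{-1}(i^*j_{!*}M) \xrightarrow{\sim} \pH^{-1}(i^*j_*M)$ and forces $\pH^k(i^*j_{!*}M) = 0$ for all $k \neq -1$. The right-hand side is $\ker(N)$ by (\ref{exactseq}), so $i^*j_{!*}M[-1] \simeq \ker(N)$ in $\DM(X_0)$.

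For (ii), apply $i^!$ instead. The standard identity $i^!j_* = 0$ for open $j$ with closed complement $i$ collapses the triangle to an isomorphism $i^!j_{!*}M \xrightarrow{\sim} i^!C[-1]$. Since $C$ is supported on $X_0$, $i^!C \simeq i^*C$; the long exact sequence from the previous paragraph then yields $\pH^0(i^*C) \simeq \pH^0(i^*j_*M) = \coker(N)$, together with vanishing of $\pH^k(i^*C)$ for $k \neq 0$. Consequently $i^!j_{!*}M \simeq \coker(N)[-1]$, i.e., $i^!j_{!*}M[1] \simeq \coker(N)$.

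The main subtlety is the bookkeeping in the long exact sequence: the two characterizing vanishings for $j_{!*}$ must be imported at the right spots, together with the concentration of $i^*j_*M$ in perverse degrees $\{-1,0\}$ coming from (\ref{dt}), in order to force the additional vanishings needed for the isomorphisms to hold in $\DM(X_0)$ rather than merely in $\MHM(X_0)$.
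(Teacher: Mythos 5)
Your proof is correct and is essentially the paper's argument in rotated form: your cone $C$ is canonically $i_*i^!j_{!*}(M)[1]$, so the triangle $j_{!*}(M) \to j_*M \to C$ is a rotation of the triangle $i^!j_{!*}(M) \to i^*j_{!*}(M) \to i^*j_*M$ that the paper obtains by applying $i^*$ to the attaching triangle, and both arguments then run the perverse long exact sequence against the two defining vanishings of the intermediate extension together with the identification of $\ker(N)$ and $\coker(N)$ inside $i^*j_*M$ coming from \eqref{exactseq}. A minor point in your favor: you derive the concentration of $i^*j_{!*}(M)$ in perverse degree $-1$ from the long exact sequence (using that \eqref{dt} places $i^*j_*M$ in perverse degrees $\{-1,0\}$), where the paper simply asserts it, and you write out part (ii) rather than leaving it as ``similar.''
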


\begin{proof}We only show (i). The proof of (ii) is similar. Apply $i^*$ to the canonical distinguished triangle:
\[ i_*i^!(j_{!*}(M)) \to j_{!*}(M) \to j_*j^*(j_{!*}(M)) \mapright{[1]} \]
to get the distinguished triangle:
\[ i^!j_{!*}(M) \to i^*j_{!*}(M) \to i^*j_*M \mapright{[1]} \]
This yields the exact sequence:
\[ \pH^{-1}(i^!j_{!*}(M)) \to \pH^{-1}(i^*j_{!*}(M)) \to \pH^{-1}(i^*j_*M) \to \pH^0(i^!j_{!*}(M)). \]
As $i^!$ is left t-exact, the left most term must vanish. Additionally, since  $j_{!*}(M)$ is the intermediate extension of $M$, the right most term must also vanish. In other words, $\ker(N)\simeq \pH^{-1}(i^*j_{!*}(M))$. On the other hand, $\pH^k(i^*j_{!*}(M)) = 0$ for $k\neq -1$.
\end{proof}

\subsection{Weights}
\subsubsection{}Mixed Hodge modules come equipped with a finite increasing filtration (the weight filtration) which we denote by $W_{\bullet}$. The associated graded is denoted $\Gr^W_{\bullet}$. Morphisms in $\MHM(X)$ are \emph{strictly} compatible with $W_{\bullet}$ \cite[1.5]{S89}.

\subsubsection{}An object $M\in \DM(X)$ is said to have weights $\leq n$ (resp. $\geq n$) if $\Gr^W_k\pH^i(M) = 0$ for $k >i+n$ (resp. $k< i+n)$. The object $M$ is called pure of weight $n$ if $\Gr^W_k \pH^i(M) = 0$ for $k\neq i+n$.

\subsubsection{}Let $M\in \MHM(X^*)$. Then:
\[ NW_k\pupsi_f(M) \subset W_{k-2}\pupsi_f(M)(-1), \]
 for each $k$. Additionally, if $M$ is pure of weight $n$, then:
\[ N^k \colon \Gr_{n-1+k}^W\pupsi_f(M) \mapright{\sim} \Gr_{n-1-k}^W\pupsi_f(M)(-k) \]
is an isomorphism for each $k\geq 0$ (see \cite[1.19]{S89} and \cite[1.6]{D}). Further, we have an isomorphism:
\[ \Gr^W_k\ker(N) \simeq \ker(N\colon \Gr^W_k\pupsi_f(M) \to \Gr^W_{k-2}\pupsi_f(M)(-1)). \]
Consequently, for $M$ pure of weight $n$, we have an isomorphism:
\begin{equation}\label{primeq} \Gr^W_k\pupsi_f(M) \simeq \bigoplus_{\stackrel{m \geq |n-1-k|,}{m = n-1-k \pmod 2}}\Gr_{n-1-m}^W\ker(N)((n-1-m-k)/2)\end{equation}

\subsection{Hodge-Grothendieck classes}
\subsubsection{}Let $K_0(X_0)$ denote the Grothendieck group of $\DM(X_0)$ (equivalently $\MHM(X_0)$).
\begin{thm}\label{hdgclass}
Let $f,g\colon X\to \AA^1$ be morphisms of varieties. If $f^{-1}(0)_{\red} = g^{-1}(0)_{\red}$, then $\psi_f^u$ and $\psi_g^u$ define the same map on Grothendieck groups. I.e., in $K_0(X_0)$:
\[ [\psi_f^u(M)] = [\psi_g^u(M)], \]
for each $M\in \DM(X)$
\end{thm}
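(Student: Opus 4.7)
The plan is to reduce the theorem to the case of a pure mixed Hodge module on $X^*$ and then to read off $\Gr^W_k\pupsi_f$ from objects manifestly independent of $f$. The two key inputs are already in hand: formula \ref{primeq}, which for pure $M$ expresses $\Gr^W_k\pupsi_f(M)$ in terms of the weight-graded pieces of $\ker(N)$, and Proposition \ref{kerprop}(i), which identifies $\ker(N)$ with $i^*j_{!*}(M)[-1]$ --- an object depending only on $i\colon X_0\to X$ and on $M$, not on the defining equation.

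Since $\MHM$ is insensitive to nilpotents on the base, the hypothesis $f^{-1}(0)_\red = g^{-1}(0)_\red$ lets me regard $\psi_f^u$ and $\psi_g^u$ as landing in a common category $\MHM(X_0)$, with common Grothendieck group $K_0(X_0)$. Applying $\psi_f^u$ to the canonical triangle $j_!j^*M \to M \to i_*i^*M$, and using that $\psi_f^u$ vanishes on complexes supported on $X_0$ (hence also on the cone of $j_!j^*M \to j_*j^*M$), I obtain $\psi_f^u(M) \simeq \psi_f^u(j_*j^*M) = \pupsi_f(j^*M)[1]$. Since $\pupsi_f$ is t-exact and $K_0(\DM(X^*)) = K_0(\MHM(X^*))$, the theorem reduces to the equality $[\pupsi_f(L)] = [\pupsi_g(L)]$ in $K_0(X_0)$ for every $L \in \MHM(X^*)$; and further filtering $L$ by its (finite) weight filtration reduces the problem to the case $L$ pure of some weight $n$.

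With $L$ pure of weight $n$, formula \ref{primeq} writes each $\Gr^W_k\pupsi_f(L)$ as a direct sum of Tate twists of pieces $\Gr^W_\ell\ker(N)$, and Proposition \ref{kerprop}(i) identifies $\ker(N) \simeq i^*j_{!*}(L)[-1]$; the identical identification holds with $g$ in place of $f$. Consequently $\Gr^W_k\pupsi_f(L) \simeq \Gr^W_k\pupsi_g(L)$ in $\MHM(X_0)$ for every $k$. Summing in $K_0$ gives
\[ [\pupsi_f(L)] = \sum_k [\Gr^W_k\pupsi_f(L)] = \sum_k [\Gr^W_k\pupsi_g(L)] = [\pupsi_g(L)], \]
which yields the theorem after unwinding the reduction above.

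Since the substantive content is carried by the two quoted ingredients, no new Hodge-theoretic computation will be required. The points demanding care are the standard facts used in the reduction step: that $\psi_f^u$ vanishes on $i_*\MHM(X_0)$ so that only $j^*M$ enters, and that $\MHM$ sees only the reduced structure on the base. Both are routine in Saito's setup but should be cited cleanly to keep the argument self-contained; I expect verifying them to be the only place where any real care is needed.
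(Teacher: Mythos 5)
Your proposal is correct and follows essentially the same route as the paper: reduce to a pure object of $\MHM(X^*)$, then combine \eqref{primeq} with Proposition \ref{kerprop}(i) to express $[\pupsi_f(M)]$ entirely in terms of $\Gr^W_\bullet i^*j_{!*}(M)[-1]$, which is independent of the defining equation. The paper leaves the reduction step implicit ("it suffices to show\dots"), whereas you spell out the standard dévissage via $j_!j^*M \to M \to i_*i^*M$ and the weight filtration; that is exactly the intended justification.
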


\begin{proof}
It suffices to show $[\pupsi_f(M)] = [\pupsi_g(M)]$ for $M\in \MHM(X^*)$.
We may also assume that $M$ is pure of weight $n$. By Proposition \ref{kerprop}(i) and \eqref{primeq}:
\begin{align*}
[\pupsi_f(M)] &= \sum_k \sum_{\stackrel{m \geq |n-1-k|,}{m = n-1-k \pmod 2}}[\Gr_{n-1-m}^Wi^*j_{!*}M[-1]((n-1-m-k)/2)] \\ 
&= [\pupsi_g(M)] \qedhere \end{align*}
\end{proof}

\begin{remark}If only the ordinary Grothendieck class of $\psi_f^u$ is of interest (i.e., working with constructible sheaves as opposed to mixed Hodge modules), then Hodge theory may be completely avoided as follows. Replace $W_{\bullet}$ by the monodromy filtration associated to $N$ and argue exactly as above. The identity \eqref{primeq} holds. The only extra ingredient needed in the proof of Theorem \ref{hdgclass} is that the induced filtration on $i^*j_{!*}(M)$ is independent of $f$.\footnote{This is the heart of the matter in the proof of Theorem \ref{hdgclass}. It is somewhat obscured by the expository choice of not making it totally explicit that the monodromy filtration and weight filtration coincide.} A purely topological demonstration of this is the main result of \cite{ELM}.\footnote{The point is that the monodromy filtration induced from nearby cycles coincides with that induced from Verdier specialization. Verdier specialization does not depend on $f$.}
\end{remark}

\pagebreak
\subsection{Invariant cycles}
\subsubsection{}The adjunction map $M\to j_*j^*M$ along with \eqref{dt} yields canonical maps:
\[ i^*M \to i^*j_*j^*M \to \psi_f^u(M).\]
\begin{thm}[Local invariant cycles]\label{locinv}
Let $M\in \DM(X)$ be pure. If $f\colon X\to \AA^1$ is proper, then the sequence of maps:
\[ H^k(X_0; i^*M) \to H^k(X_0; \psi_f^u(M)) \mapright{N} H^k(X_0; \psi_f^u(M))(-1) \]
is exact for each $k$.
\end{thm}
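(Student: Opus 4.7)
By construction the map $i^*M \to \psi_f^u(M)$ factors through $i^*j_*j^*M$, and rotating the distinguished triangle \eqref{dt} (applied to $j^*M$) produces
$$ i^*j_*j^*M \to \psi_f^u(M) \xrightarrow{N} \psi_f^u(M)(-1) \xrightarrow{+1}. $$
The composite $i^*M \to \psi_f^u(M) \xrightarrow{N} \psi_f^u(M)(-1)$ therefore vanishes, so the image of $H^k(X_0; i^*M)$ in $H^k(X_0; \psi_f^u M)$ is automatically contained in $\ker(N)$. What remains is the reverse inclusion.

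The first step is to push forward along $f$. Properness of $f$, together with compatibility of unipotent nearby cycles with proper pushforward, yields canonical natural isomorphisms $H^k(X_0; i^*M) \simeq H^k(0^*N_0)$ and $H^k(X_0; \psi_f^u M) \simeq H^k(\psi^u_{\mathrm{id}} N_0)$, where $N_0 := f_*M \in \DM(\AA^1)$. By Saito's decomposition theorem for proper pushforward of pure mixed Hodge modules, $N_0$ is pure on $\AA^1$. The problem thus reduces to proving the analogous statement on $\AA^1$ with $f = \mathrm{id}$, i.e., exactness of $H^k(0^*N_0) \to H^k(\psi^u N_0) \xrightarrow{N} H^k(\psi^u N_0)(-1)$ for $N_0$ pure on $\AA^1$, where $H^k$ now just means cohomology of a complex on a point.

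Invoking the decomposition theorem again, $N_0 \simeq \bigoplus_\ell \pH^\ell(N_0)[-\ell]$ and each summand decomposes further into a finite direct sum of simple pure perverse Hodge modules on $\AA^1$: either skyscrapers at points or intermediate extensions $j_{U, !*}(L[1])$ for $L$ a pure polarizable VMHS on an open $U \subset \AA^1$. Since every functor in the sequence respects direct sums, and shifts only reindex $k$, we may assume $N_0$ is a single simple summand. Skyscrapers and IC extensions with $0 \notin \overline U$ yield trivially exact sequences (one side vanishes). If $0 \in U$, then near $0$ the object $N_0$ is a shifted local system, whence $N = 0$ and $0^*N_0 \to \psi^u N_0$ is an isomorphism.

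The only substantive case is $N_0 = j_{U, !*}(L[1])$ with $0 \in \overline U \setminus U$. Then $\psi^u(N_0) = V[1]$, where $V$ is the unipotent limit mixed Hodge structure of $L$ at $0$, and Proposition \ref{kerprop}(i) identifies $0^*N_0$ with $L^T$ placed in cohomological degree $-1$ and the natural map $0^*N_0 \to \psi^u N_0$ with the inclusion $L^T \hookrightarrow V$. For $k \neq -1$ both sides of the sequence vanish; for $k = -1$ it reads $L^T \hookrightarrow V \xrightarrow{N} V(-1)$, whose image is $L^T$ and whose kernel is $V^N$. The identity $L^T = V^N$ follows from the fact that $T$ restricted to the unipotent part equals $\exp(N)$ with $N$ nilpotent, so $\ker(T - 1) = \ker(N)$. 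The main obstacle is not mathematical depth but careful bookkeeping of shifts and Tate twists through the reductions; the content collapses to this elementary identity at the end.
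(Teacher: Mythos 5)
Your proof is correct, but it takes a genuinely different route from the paper's. The paper argues entirely with weights: properness of $f$ gives that the weight filtration on $H^k(X_0;\psi_f^u(M))$ is the monodromy filtration centered at $n+k$, so $\ker(N)$ has weights $\leq n+k$; the triangle \eqref{dt} identifies $\ker(N)$ with the image of $H^k(X_0;i^*j_*j^*M)$; and semipurity ($i^!$ does not lower weights, so $H^{k+1}(X_0;i^!M)$ has weights $\geq n+k+1$) forces $H^k(X_0;i^*M)\to H^k(X_0;i^*j_*j^*M)$ to be surjective in weights $\leq n+k$. You instead push everything down to $\AA^1$ (via proper base change and the commutation of $\psi^u$ with proper direct images), invoke Saito's decomposition theorem to split $f_*M$ into shifted simple summands, and check exactness summand by summand; the only nontrivial case collapses to $\ker(T-1)=\ker(N)$ for $T=\exp(N)$ unipotent, which is in substance \eqref{exactseq} combined with Proposition \ref{kerprop}(i). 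Both arguments rest on deep parts of Saito's theory (weight--monodromy for the paper, the decomposition theorem plus compatibility of $\psi^u$ with proper pushforward for you), so neither is more elementary; the paper's version is shorter, never leaves $X$, and, as its closing remark indicates, extends directly to a Clemens--Schmid sequence, while yours makes transparent exactly where purity enters and why the statement fails for, say, $j_!$-extensions (where $i^*$ vanishes but $N$ can be zero on a nonzero nearby fibre). Two small points to tighten: the non-canonicity of the decomposition is harmless only because the sequence in question is natural in $N_0$, and your identification of the map $0^*N_0\to\psi^u(N_0)$ with the inclusion of $\ker(N)$ should be justified by noting that $\pH^{-1}(i^*j_{!*})\to\pH^{-1}(i^*j_*)$ is an isomorphism and then appealing to \eqref{exactseq}.
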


\begin{proof}
Say $M$ is pure of weight $n$. As pushforward along a proper map preserves weights \cite[1.8]{S89}, the weight filtration on $H^k(X_0; \psi^u_f(M))$ is the monodromy filtration\footnote{In the sense of \cite[1.6]{D}} centered at $n+k$ \cite[(5.3.4.2)]{S88}, \cite[1.19]{S89}. In particular, $\ker(N)$ has weights $\leq n+k$.
From \eqref{dt} we infer that $\ker(N)$ is the image of:
\[ H^{k}(X_0; i^*j_*j^*M) \to H^k(X_0; \psi_f^u(M)). \]
 Thus, it suffices to show $H^k(X_0; i^*M) \to H^k(X_0; i^*j_*j^*M)$ is surjective on weights $\leq n+k$. The map $i^*M\to i^*j_*j^*M$ fits into a distinguished triangle:
\[ i^!M \to i^*M \to i^*j_*j^*M \mapright{[1]} \]
This yields an exact sequence:
\[ H^k(X_0; i^*M) \to H^k(X_0; i^*j_*j^*M) \to H^{k+1}(X_0; i^!M). \]
As $i^!$ does not lower weights \cite[1.7]{S89}, the right most term has weights $\geq n+k+1$. Hence, $H^k(X_0; i^*M) \to H^k(X_0; i^*j_*j^*M)$ must be surjective on weights $\leq n+k$.
\end{proof}
\begin{remark}
If $X$ is rationally smooth, then taking $M$ to be the constant sheaf on $X$ (with trivial Hodge structure)\footnote{More precisely, $\MHM(\Spec(\CC))$ is the category of polarizable $\QQ$-mixed Hodge structures \cite[1.4]{S89}. Let $\QQ^H$ be the trivial 1-dimensional, weight $0$ Hodge structure. Let $a\colon X\to \Spec(\CC)$ be the structure map. Take $M=a^*\QQ^H$.} recovers the classical local invariant cycles theorem.
\end{remark}
\begin{remark}Using an argument similar to the proof above, the exact sequence of Theorem \ref{locinv} may be extended to a generalized Clemens-Schmid exact sequence.
\end{remark}

\end{document}